\documentclass[reqno]{amsart}

\usepackage[T1]{fontenc}
\usepackage{amssymb}
\usepackage{enumitem}
\usepackage{mathrsfs}
\usepackage[colorlinks, linkcolor=blue, citecolor=blue, urlcolor=blue]{hyperref}
\usepackage{tikz}

\newtheorem{theorem}{Theorem}[section]
\newtheorem{corollary}[theorem]{Corollary}
\newtheorem{lemma}[theorem]{Lemma}

\theoremstyle{definition}

\numberwithin{equation}{section}

\newcommand{\nsum}{\mathbin{\#}}
\DeclareMathOperator{\ran}{ran}
\DeclareMathOperator{\rk}{rk}
\DeclareMathOperator{\hgt}{ht}
\DeclareMathOperator{\ot}{ot}
\DeclareMathOperator{\cf}{cf}
\DeclareMathOperator{\Gr}{Gr}

\title{A choice-free proof of the Erd\H{o}s--Dushnik--Miller theorem}

\author{Guozhen Shen}
\address{Department of Philosophy (Zhuhai)\\
Sun Yat-sen University\\
No.~2 Daxue Road\\
Zhuhai\\
Guangdong Province 519082\\
People's Republic of China}
\email{shen\_guozhen@outlook.com}

\thanks{The author used ChatGPT (OpenAI) for assistance with language editing and for discussing and checking mathematical arguments during the preparation of this manuscript. All mathematical statements, arguments, and proofs included in the final version were independently verified by the author, who takes full responsibility for the content of the article. The author was partially supported by National Natural Science Foundation of China grant number 12671003.}

\subjclass[2020]{Primary 03E02; Secondary 03E10, 03E25}

\keywords{Erd\H{o}s--Dushnik--Miller theorem, Kurepa's theorem, axiom of choice}

\begin{document}

\begin{abstract}
The Erd\H{o}s--Dushnik--Miller theorem states that for every aleph $\kappa$,
\[
\kappa\to(\kappa,\omega);
\]
that is, every coloring $c:[\kappa]^2\to2$ has either a $0$-homogeneous set of cardinality $\kappa$
or a $1$-homogeneous set of cardinality $\omega$.
In this article, we present a purely combinatorial proof of this theorem in $\mathsf{ZF}$
(i.e., Zermelo--Fraenkel set theory without the axiom of choice), avoiding any metamathematical considerations.
\end{abstract}

\maketitle

\section{Introduction}
In 1941, Dushnik and Miller~\cite{Dushnik1941} proved (using the axiom of choice) the following theorem:
if $G$ is a graph of cardinality $\kappa$, where $\kappa$ is an aleph,
and if every subset of $G$ of cardinality $\kappa$ contains two connected elements,
then $G$ contains a complete graph of cardinality $\omega$.
They credited Erd\H{o}s with the proof of the result when $\kappa$ is a singular cardinal,
and the result is now generally known as the Erd\H{o}s--Dushnik--Miller theorem.
Using the Erd\H{o}s--Rado arrow notation, the result can be stated~as
\[
\kappa\to(\kappa,\omega);
\]
that is, every coloring $c:[\kappa]^2\to2$ has either a $0$-homogeneous set of cardinality $\kappa$
or a $1$-homogeneous set of cardinality $\omega$.

In an unpublished manuscript~\cite{Karagila2014}, Karagila used absoluteness to show that
the Erd\H{o}s--Dushnik--Miller theorem can be proved in $\mathsf{ZF}$.
Recently, Csern\'ak and Soukup~\cite{Csernak2025} restated Karagila's ideas and asked for
an elementary (combinatorial) proof for the Erd\H{o}s--Dushnik--Miller theorem (see~\cite[Problem~4.3]{Csernak2025}).
In this article, we present such a proof, avoiding any metamathematical considerations.

\section{Preliminaries}
Throughout this article, we shall work in $\mathsf{ZF}$. In this section, we briefly introduce
the terminology and notation we use and present some results needed to prove our main theorems.

\subsection{Well-founded posets and trees}
A \emph{partially ordered set} (or \emph{poset}) is a relational structure
$\langle P,\prec\rangle$ such that $\prec$ is irreflexive and transitive.
A poset $\langle P,\prec\rangle$ is \emph{well-founded} if every nonempty subset of $P$ has a $\prec$-minimal element.

For a well-founded poset $\langle P,\prec\rangle$, we define the \emph{rank} function recursively by
\[
\rk_\prec(q)=\textstyle\sup^+\{\rk_\prec(p)\mid p\prec q\},
\]
where $\sup^+$ denotes the least strict upper bound. The \emph{height} of $\langle P,\prec\rangle$ is defined by
\[
\hgt(P)=\textstyle\sup^+\{\rk_\prec(p)\mid p\in P\}.
\]
An easy induction shows that, for all $q\in P$,
\[
\rk_\prec(q)=\{\rk_\prec(p)\mid p\prec q\},
\]
and hence
\[
\hgt(P)=\{\rk_\prec(p)\mid p\in P\}.
\]
For every $\alpha<\hgt(P)$, the \emph{$\alpha$-th level} of $\langle P,\prec\rangle$ is defined by
\[
P_\alpha=\{p\in P\mid\rk_\prec(p)=\alpha\}.
\]
A \emph{chain} in $\langle P,\prec\rangle$ is a subset $C\subseteq P$ that is totally ordered by $\prec$;
that is, for all $p,q\in C$, either $p\prec q$, $p=q$, or $q\prec p$.

A \emph{tree} is a well-founded poset $\langle T,\prec\rangle$ such that $\{p\in T\mid p\prec q\}$ is a chain for every $q\in T$.
A maximal chain in a tree is called a \emph{branch}.

\subsection{Order types and cofinalities}
Let $\langle P,\prec\rangle$ be a well-ordered set (i.e., a well-founded, totally ordered set).
Then there is a unique ordinal, called the \emph{order type} of $\langle P,\prec\rangle$ and denoted by $\ot(P)$,
and there is a unique isomorphism, denoted by $e_P$, from $P$ onto $\ot(P)$.

Let $\kappa$ be an aleph (i.e., an infinite well-ordered cardinal).
Clearly, for all $A\subseteq\kappa$, $\ot(A)<\kappa$ if and only if $|A|<\kappa$.
It is well known that the set $\kappa^{<\omega}$ of all finite sequences on $\kappa$ has cardinality $\kappa$.
The cofinality of $\kappa$ is denoted by $\cf(\kappa)$.
For every $\alpha<\cf(\kappa)$, every function from $\alpha$ to $\kappa$ is bounded.

\subsection{Natural sums}
For ordinals $\alpha$ and $\beta$, we can uniquely represent them as
\begin{align*}
\alpha & =\omega^{\gamma_0}\cdot k_0+\dots+\omega^{\gamma_{n-1}}\cdot k_{n-1},\\
\beta  & =\omega^{\gamma_0}\cdot l_0+\dots+\omega^{\gamma_{n-1}}\cdot l_{n-1},
\end{align*}
where $\gamma_0>\dots>\gamma_{n-1}$ and $0<k_i+l_i<\omega$ for every $i<n$.
The \emph{natural sum} $\alpha\nsum\beta$ of $\alpha$ and $\beta$ is defined by
\[
\alpha\nsum\beta=\omega^{\gamma_0}\cdot(k_0+l_0)+\dots+\omega^{\gamma_{n-1}}\cdot(k_{n-1}+l_{n-1}).
\]
Clearly, for every aleph $\kappa$ and every $\alpha,\beta<\kappa$, we have $\alpha\nsum\beta<\kappa$.

\begin{lemma}\label{S001}
For any sets $A$ and $B$ of ordinals, $\ot(A\cup B)\leqslant\ot(A)\nsum\ot(B)$.
\end{lemma}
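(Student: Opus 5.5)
We prove the lemma by induction on $\ot(A)\nsum\ot(B)$, or equivalently by a double induction on $\langle\ot(A),\ot(B)\rangle$. This recursion uses only the well-ordering of the ordinals and needs no choice.

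The natural sum has two properties we will use. First, it is strictly increasing in each argument: if $\alpha'<\alpha$ then $\alpha'\nsum\beta<\alpha\nsum\beta$, and symmetrically in $\beta$. This follows directly from the Cantor normal form description. Second, every ordinal below $\alpha\nsum\beta$ is $\leqslant\alpha'\nsum\beta$ for some $\alpha'<\alpha$, or $\leqslant\alpha\nsum\beta'$ for some $\beta'<\beta$. We will not need this second property in that form. Instead we use the cleaner characterization
\[
\alpha\nsum\beta=\textstyle\sup^+\bigl(\{\alpha'\nsum\beta\mid\alpha'<\alpha\}\cup\{\alpha\nsum\beta'\mid\beta'<\beta\}\bigr).
\]
This identity should be verified from the normal form. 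The inequality $\geqslant$ follows from strict monotonicity. For $\leqslant$, take $\delta<\alpha\nsum\beta$ and compare its normal form with that of $\alpha\nsum\beta$. At the first exponent where they differ, the coefficient of $\delta$ is smaller. Decreasing the corresponding $k_i$ or $l_i$ by one (whichever is positive) and replacing the tail by a large enough term gives $\alpha'<\alpha$ or $\beta'<\beta$ whose natural sum with the other argument still exceeds $\delta$. I expect this verification to be the main technical obstacle, though it is routine.

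Now set $C=A\cup B$, $\alpha=\ot(A)$ and $\beta=\ot(B)$. Since $\ot(C)=\{\ot(C\cap\xi)\mid\xi\in C\}$, it suffices to show that $\ot(C\cap\xi)<\alpha\nsum\beta$ for every $\xi\in C$. Fix $\xi\in C$ and suppose $\xi\in A$; the case $\xi\in B$ is symmetric. Then $C\cap\xi=(A\cap\xi)\cup(B\cap\xi)$. Here $\ot(A\cap\xi)=e_A(\xi)<\alpha$ and $\ot(B\cap\xi)\leqslant\beta$. By the induction hypothesis applied to the pair $(A\cap\xi,B\cap\xi)$, whose natural sum of order types is $\leqslant e_A(\xi)\nsum\beta<\alpha\nsum\beta$ by monotonicity, we get
\[
\ot(C\cap\xi)\leqslant e_A(\xi)\nsum\ot(B\cap\xi)\leqslant e_A(\xi)\nsum\beta<\alpha\nsum\beta.
\]
Hence every initial segment of $C$ has order type below $\alpha\nsum\beta$, which gives $\ot(C)\leqslant\alpha\nsum\beta$.

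To avoid any issue about quantifying over sets in the induction, we phrase it as follows. By induction on the ordinal $\gamma$, we prove that every pair $A,B$ with $\ot(A)\nsum\ot(B)\leqslant\gamma$ satisfies the lemma. With this formulation, only monotonicity of $\nsum$ (and weak monotonicity, for the step $\ot(B\cap\xi)\leqslant\beta$) is actually needed, and the $\sup^+$ characterization above is unnecessary. So the only real work is checking the strict monotonicity of $\nsum$ in each argument from the normal-form definition.
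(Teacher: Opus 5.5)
Your proof is correct. The paper gives no argument of its own here; it only cites \cite[IV.2.22(vii)]{Levy1979}. So yours is a self-contained alternative to a citation.

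You argue by induction on $\gamma=\ot(A)\nsum\ot(B)$ and reduce $\ot(A\cup B)\leqslant\gamma$ to the bound $\ot((A\cup B)\cap\xi)<\gamma$ for each $\xi\in A\cup B$. That bound follows from the induction hypothesis applied to the pair $(A\cap\xi,B\cap\xi)$, using $e_A(\xi)<\ot(A)$ and $\ot(B\cap\xi)\leqslant\ot(B)$ (or the symmetric pair of facts when $\xi\in B$). The only other input is strict monotonicity of $\nsum$ in each argument, which is immediate from lexicographic comparison of Cantor normal forms over a common list of exponents.

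The citation keeps the paper short. Your version makes explicit that only transfinite induction on ordinals and normal-form arithmetic are used, so the lemma visibly holds in $\mathsf{ZF}$, which is the setting the whole paper depends on.

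Two small remarks. First, the $\sup^+$ characterization in your second paragraph is unused, as you note, and can be deleted. As sketched it also overclaims: when the first differing exponent is $0$, you can only get a natural sum $\geqslant\delta$, not one exceeding $\delta$. Second, the induction can be removed altogether. The map $\xi\mapsto\ot(A\cap\xi)\nsum\ot(B\cap\xi)$ is strictly increasing from $A\cup B$ into $\ot(A)\nsum\ot(B)$. This uses exactly the monotonicity facts you already invoke: for $\xi<\eta$ in $A\cup B$, one coordinate strictly increases (whichever of $A$, $B$ contains $\xi$) and the other weakly increases. That gives the lemma in one step.
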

\begin{proof}
See~\cite[IV.2.22(vii)]{Levy1979}.
\end{proof}

\section{Kurepa's theorem}
In 1935, Kurepa proved (using the axiom of choice) the following theorem (see~\cite[Proposition~7.9]{Kanamori2003} or \cite[IX.2.32]{Levy1979}).
Let $\kappa$ and $\lambda$ be alephs such that $\lambda<\cf(\kappa)$.
If $\langle T,\prec\rangle$ is a tree of height $\kappa$ such that $|T_\alpha|<\lambda$ for all $\alpha<\kappa$,
then $\langle T,\prec\rangle$ has a branch of order type $\kappa$.

In this section, we prove, without using the axiom of choice,
the case $\lambda=\omega$ of Kurepa's theorem for arbitrary well-founded posets.
To do this, we first prove the following lemma.

We say that a well-founded poset $\langle P,\prec\rangle$ is \emph{gapless}
if for every $p,q\in P$ with $p\prec q$ and every ordinal $\gamma$ with $\rk_\prec(p)<\gamma<\rk_\prec(q)$,
there exists $r\in P$ such that $p\prec r\prec q$ and $\rk_\prec(r)=\gamma$.

\begin{lemma}\label{S002}
Let $\langle P,\prec\rangle$ be a well-founded poset each of whose levels is finite.
Then there exists ${\prec_\omega}\subseteq{\prec}$ such that $\langle P,\prec_\omega\rangle$
is a gapless well-founded poset and $\rk_\prec(q)=\rk_{\prec_\omega}(q)$ for all $q\in P$.
\end{lemma}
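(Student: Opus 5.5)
The plan is to define $\prec_\omega$ explicitly, with no choices, by requiring witnessing chains:
\[
p\prec_\omega q \iff p\prec q \text{ and there is a chain } C\subseteq P \text{ with } p,q\in C,\ \{\rk_\prec(r)\mid r\in C\}=[\rk_\prec(p),\rk_\prec(q)],
\]
where each $r\in C$ satisfies $p\preccurlyeq r\preccurlyeq q$. I will call such a $C$ a \emph{full chain} from $p$ to $q$. Most of the required properties are then formal.

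\begin{enumerate}
\item \emph{Inclusion and well-foundedness.} Clearly ${\prec_\omega}\subseteq{\prec}$, so $\prec_\omega$ is irreflexive and well-founded.
\item \emph{Transitivity.} If $C$ is a full chain from $p$ to $q$ and $D$ is a full chain from $q$ to $s$, then $C\cup D$ is a full chain from $p$ to $s$.
\item \emph{Gaplessness.} Let $C$ witness $p\prec_\omega q$ and let $\rk_\prec(p)<\gamma<\rk_\prec(q)$. Take $r\in C$ with $\rk_\prec(r)=\gamma$. Then $\{s\in C\mid s\preccurlyeq r\}$ and $\{s\in C\mid r\preccurlyeq s\}$ are full chains from $p$ to $r$ and from $r$ to $q$. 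The key point is that in a chain, ranks are strictly increasing along $\prec$, so these two pieces use exactly the levels in $[\rk_\prec(p),\gamma]$ and $[\gamma,\rk_\prec(q)]$ respectively.
\end{enumerate}

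It remains to show $\rk_{\prec_\omega}(q)=\rk_\prec(q)$ by induction on $\rk_\prec(q)$. Since ${\prec_\omega}\subseteq{\prec}$, the inequality $\leqslant$ is automatic. For $\geqslant$, the induction hypothesis together with $\rk_\prec(q)=\{\rk_\prec(p)\mid p\prec q\}$ reduces the problem to the following existence claim:
\[
(\ast)\qquad \text{for every } q\in P \text{ and every } \alpha<\rk_\prec(q), \text{ some full chain ends in } q \text{ and starts at level } \alpha.
\]
Note that $(\ast)$ only asserts existence; no uniform choice of chains is needed anywhere in the argument.

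I would prove $(\ast)$ by induction on $\rk_\prec(q)$.
\begin{itemize}
\item \emph{Successor case.} If $\rk_\prec(q)=\beta+1$, pick a single $p\prec q$ of rank $\beta$ (one choice is harmless). If $\alpha=\beta$, take $\{p,q\}$. Otherwise take a full chain from level $\alpha$ to $p$, given by induction, and add $q$.
\item \emph{Limit case.} This is the main obstacle: with $\rk_\prec(q)=\lambda$ limit, a full chain must be built through cofinally many levels, and infinitely many choices are not available in $\mathsf{ZF}$. Here I would use finiteness of the levels to replace choice by a canonical compactness (K\H{o}nig-type) argument.
\end{itemize}

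For the limit case, for $\alpha\leqslant\beta<\lambda$ let $A_\beta=\{p\in P_\beta\mid p\prec q\}$. Each $A_\beta$ is finite, and it is nonempty by the rank formula. Call $p\in A_\beta$ \emph{good} if for every finite $F\subseteq[\beta,\lambda)$ there is a finite chain through $p$, lying in $\bigcup_{\delta\in F}A_\delta$ and meeting every $A_\delta$ with $\delta\in F$, plus the level-$\beta$ requirement at $p$ itself. The argument then runs as follows.
\begin{enumerate}
\item Since $A_\beta$ is finite and the finite sets $F$ form a directed family, some element of $A_\beta$ is good. Otherwise each of the finitely many $p$ fails for some $F_p$, and $\bigcup_p F_p$ is a finite union over a finite index set, so no choice is used; this union would defeat every $p$. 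This contradicts the existence of finite chains, which follows from the successor/induction step applied finitely often.
\item Let $G_\beta\neq\emptyset$ be the set of good elements of $A_\beta$. The same argument shows that every good $p$ has a good $\prec$-successor at each higher level, and a good predecessor at each lower level $\geqslant\alpha$.
\item To extract an actual chain canonically, I would iterate the ``good'' refinement: replace $A_\beta$ by $G_\beta$ and repeat transfinitely. Since the sets are finite and decreasing, the refinement stabilizes. I would then argue that the stable family has some level on which the stable set is a singleton, forced by a minimality or cardinality argument across levels. Propagating from that level yields a chain definable from $q$ and $\alpha$, which is then a full chain from level $\alpha$ to $q$.
\end{enumerate}

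If this canonical extraction proves too delicate, the fallback is the following. Well-order the finite set $\bigcup_{\beta<\lambda}\{G_\beta\}$-structure indirectly by noting that $\prec$ restricted to $\bigcup_\beta G_\beta$ is a tree-like finitely branching system indexed by ordinals. Then run Kurepa/K\H{o}nig for finitely branching trees of ordinal height, whose $\mathsf{ZF}$ proof uses only that finite sets of sets of ordinal-indexed objects can be handled level-by-level.
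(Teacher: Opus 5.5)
Your formal steps (inclusion, transitivity, gaplessness, and the reduction of rank preservation to $(\ast)$) are fine. The gap is that $(\ast)$ is not provable in $\mathsf{ZF}$. So defining $\prec_\omega$ by full chains cannot work, and no refinement of your limit case will rescue it.

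Take a family $\{\{a_n,b_n\}\mid n\in\omega\}$ of pairs with no choice function (Russell's socks; its existence is consistent with $\mathsf{ZF}$). Let $P=\{a_n,b_n\mid n\in\omega\}\cup\{q\}$, and let $x\prec y$ iff $x\in\{a_m,b_m\}$ and either $y=q$ or $y\in\{a_n,b_n\}$ with $m<n$. This is a well-founded poset with finite levels $P_n=\{a_n,b_n\}$ and $P_\omega=\{q\}$, and $\prec$ is already gapless, so the lemma holds trivially here.

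Distinct elements of a chain have distinct ranks. Hence a full chain from any $p\in P_m$ to $q$ meets every $P_n$ with $n\geqslant m$ in exactly one point, and this yields a choice function. So under your definition nothing is $\prec_\omega$-below $q$, and $\rk_{\prec_\omega}(q)=0\neq\omega$.

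In this example every element is ``good'' and every refined level stays a pair, so the singleton-level claim in your step (3) is false. Your fallback is K\H{o}nig's lemma for finitely branching trees, which also fails here: the tree of finite partial choice functions for the pairs has no infinite branch. In the paper, a coherent choice of a whole chain is made only in Theorem~\ref{S004}, where $|P|=\kappa$ supplies a well-ordering of $P$. No such well-ordering is available in the lemma.

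The missing idea is to swap the quantifiers: require only $\forall\gamma\,\exists r$ instead of $\exists C\,\forall\gamma$. The paper sets $p\prec'q$ iff $p\prec q$ and every $\gamma$ strictly between $\rk_\prec(p)$ and $\rk_\prec(q)$ is the rank of some $r$ with $p\prec r\prec q$. This relation is transitive and preserves ranks.

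In the limit case of that rank argument, suppose every $p\prec q$ of rank $\alpha$ failed. Then the finitely many least missing levels $\gamma_p$ would have a strict upper bound $\delta<\rk_\prec(q)$. Applying the induction hypothesis to some $s\prec q$ of rank $\delta$ gives a $p\prec's$ of rank $\alpha$. Its missing level $\gamma_p<\delta$ is then filled below $s$, hence below $q$, which is a contradiction.

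Since the witnesses for $\prec'$ are only $\prec$-between, one step need not be gapless. The paper therefore iterates $\prec_{n+1}=\prec_n'$ and sets $\prec_\omega=\bigcap_n\prec_n$. Rank preservation and gaplessness pass to $\prec_\omega$ because the relevant witness sets, e.g.\ $\{r\in P_\gamma\mid p\prec_n r\prec_n q\}$, form a decreasing sequence of nonempty subsets of a finite set. Such a sequence has nonempty intersection without any use of choice.
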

\begin{proof}
We define a relation $\prec'$ on $P$ by
\[
p\prec'q\iff p\prec q\wedge\forall\gamma(\rk_\prec(p)<\gamma<\rk_\prec(q)\rightarrow\exists r(p\prec r\prec q\wedge\rk_\prec(r)=\gamma)).
\]
Clearly, $\langle P,\prec'\rangle$ is a well-founded poset. We claim that, for all $q\in P$,
\begin{equation}\label{S003}
\rk_\prec(q)=\rk_{\prec'}(q).
\end{equation}

We prove \eqref{S003} by $\prec$-induction on $q$. Assume inductively that $\rk_\prec(p)=\rk_{\prec'}(p)$ for every $p\prec q$.
We have to show that $\rk_\prec(q)=\rk_{\prec'}(q)$. Since ${\prec'}\subseteq{\prec}$,
it follows from the induction hypothesis that $\rk_\prec(q)\geqslant\rk_{\prec'}(q)$.

We prove $\rk_\prec(q)\leqslant\rk_{\prec'}(q)$ as follows. If $\rk_\prec(q)=0$, this is obvious.
If $\rk_\prec(q)$ is a successor ordinal, say $\beta+1$, then $\beta=\rk_\prec(p)=\rk_{\prec'}(p)$ for some $p\prec q$.
Clearly, $p\prec'q$, and hence $\rk_\prec(q)=\beta+1\leqslant\rk_{\prec'}(q)$. Suppose now that $\rk_\prec(q)$ is a limit ordinal.
Let $\alpha<\rk_\prec(q)$. By the induction hypothesis, it suffices to prove that $\alpha=\rk_\prec(p)$ for some $p\prec'q$.
Assume to the contrary that $p\nprec'q$ for all $p\prec q$ with $\rk_\prec(p)=\alpha$.
By definition, for each $p\prec q$ with $\rk_\prec(p)=\alpha$,
there is a least ordinal $\gamma_p$ such that $\alpha<\gamma_p<\rk_\prec(q)$
and there are no $r\in P$ with $p\prec r\prec q$ and $\rk_\prec(r)=\gamma_p$.
Let
\[
\delta=\textstyle\sup^+\{\gamma_p\mid p\prec q\wedge\rk_\prec(p)=\alpha\}.
\]
Since each level of $\langle P,\prec\rangle$ is finite and $\rk_\prec(q)$ is a limit ordinal,
it follows that $\delta<\rk_\prec(q)$, and hence $\delta=\rk_\prec(s)$ for some $s\prec q$.
By the induction hypothesis, $\alpha<\delta=\rk_{\prec'}(s)$,
and hence $\alpha=\rk_{\prec'}(p)=\rk_\prec(p)$ for some $p\prec's$.
Since $p\prec's$ and $\alpha<\gamma_p<\delta$, it follows that $\gamma_p=\rk_\prec(r)$ for some $r\in P$
with $p\prec r\prec s$ and $\rk_\prec(r)=\gamma_p$, which is a contradiction. Thus \eqref{S003} is proved.

We define by recursion
\begin{align*}
  {\prec_0}      & ={\prec}, \\
  {\prec_{n+1}}  & ={\prec_n'}, \\
  {\prec_\omega} & =\bigcap_{n\in\omega}{\prec_n}.
\end{align*}
By \eqref{S003}, an easy induction shows that, for every $n\in\omega$,
$\langle P,\prec_n\rangle$ is a well-founded poset and $\rk_\prec(q)=\rk_{\prec_n}(q)$ for all $q\in P$.
It is then easy to see that $\langle P,\prec_\omega\rangle$ is a well-founded poset.

We prove that $\rk_\prec(q)=\rk_{\prec_\omega}(q)$ by $\prec$-induction on $q$.
Assume inductively that $\rk_\prec(p)=\rk_{\prec_\omega}(p)$ for every $p\prec q$.
Since ${\prec_\omega}\subseteq{\prec}$, it follows from the induction hypothesis that $\rk_\prec(q)\geqslant\rk_{\prec_\omega}(q)$.
To prove the other direction, let $\alpha<\rk_\prec(q)$.
For each $n\in\omega$, since $\alpha<\rk_\prec(q)=\rk_{\prec_n}(q)$,
there exists $p\prec_nq$ such that $\rk_\prec(p)=\rk_{\prec_n}(p)=\alpha$.
Since each level of $\langle P,\prec\rangle$ is finite, there exists $p\in P$ with $\rk_\prec(p)=\alpha$
such that $p\prec_nq$ for infinitely many $n\in\omega$, and hence $p\prec_\omega q$.
By the induction hypothesis, $\rk_{\prec_\omega}(p)=\rk_\prec(p)=\alpha$. Therefore, $\alpha<\rk_{\prec_\omega}(q)$.

Finally, we prove that $\langle P,\prec_\omega\rangle$ is gapless.
Let $p,q\in P$ with $p\prec_\omega q$ and let $\gamma$ be an ordinal such that $\rk_{\prec_\omega}(p)<\gamma<\rk_{\prec_\omega}(q)$.
For each $n\in\omega$, since $p\prec_{n+1}q$, ${\prec_{n+1}}={\prec_n'}$, and $\rk_{\prec_n}(p)<\gamma<\rk_{\prec_n}(q)$,
it follows that there exists $r\in P$ such that $p\prec_nr\prec_nq$ and $\rk_\prec(r)=\rk_{\prec_n}(r)=\gamma$.
Since each level of $\langle P,\prec\rangle$ is finite, there exists $r\in P$ with $\rk_\prec(r)=\gamma$
such that $p\prec_nr\prec_nq$ for infinitely many $n\in\omega$, and hence $p\prec_\omega r\prec_\omega q$ and $\rk_{\prec_\omega}(r)=\gamma$.
\end{proof}

For a well-founded poset $\langle P,\prec\rangle$, we say that an element $p\in P$ is a \emph{blind alley} if
\[
\textstyle\sup^+\{\rk_\prec(q)\mid p\preccurlyeq q\}<\hgt(P).
\]
Clearly, if $\langle P,\prec\rangle$ is gapless, then for every $p\in P$ that is not a blind alley and every ordinal
$\gamma$ with $\rk_\prec(p)<\gamma<\hgt(P)$, there exists $r\in P_\gamma$ such that $p\prec r$.

The main idea behind the proof of the following theorem is taken from~\cite{Caicedo2010}.

\begin{theorem}\label{S004}
Let $\kappa$ be an aleph and let $\langle P,\prec\rangle$ be a well-founded poset each of whose levels is finite.
If $|P|=\kappa$, then $\langle P,\prec\rangle$ has a chain of order type $\kappa$.
\end{theorem}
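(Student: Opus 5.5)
The plan is to build the chain by transfinite recursion along the levels. Gaplessness (Lemma~\ref{S002}) together with finiteness of the levels takes care of limit stages, and a fixed well-ordering of $P$, available because $|P|=\kappa$, replaces all choices.

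\emph{Reduction to height $\kappa$.} Fix a bijection between $P$ and $\kappa$; this gives a well-ordering $\lhd$ of $P$. The levels $P_\alpha$ for $\alpha<\hgt(P)$ are finite and cover $P$, and we can enumerate $P$ level by level using $\lhd$. Hence $|P|\leqslant|\hgt(P)|\cdot\omega$, so $\hgt(P)\geqslant\kappa$. Put $P'=\{p\in P\mid\rk_\prec(p)<\kappa\}$. Ranks computed in $P'$ agree with those in $P$, and $P'$ has height exactly $\kappa$ with finite levels. Any chain in $P'$ is a chain in $P$, so we may assume $\hgt(P)=\kappa$. By Lemma~\ref{S002} we may replace $\prec$ by $\prec_\omega$. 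Ranks are unchanged, and a chain for $\prec_\omega$ is a chain for $\prec$. So we assume in addition that $\langle P,\prec\rangle$ is gapless.

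\emph{Non-blind-alleys.} Let $Q$ be the set of elements of $P$ that are not blind alleys. I will prove the following key claim: for every $p\in Q$ and every $\gamma$ with $\rk(p)<\gamma<\kappa$, some $s\in Q\cap P_\gamma$ satisfies $p\prec s$. I also need $Q\cap P_0\neq\emptyset$.

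For the claim, let $A=\{s\in P_\gamma\mid p\prec s\}$. This set is finite, and it is nonempty by the remark after the definition of blind alley. Suppose every $s\in A$ were a blind alley. Take $r\succ p$ with $\rk(r)>\gamma$. By gaplessness there is $s$ with $p\prec s\prec r$ and $\rk(s)=\gamma$, so $s\in A$. Therefore $\sup^+\{\rk(r)\mid p\preccurlyeq r\}$ is at most the maximum of finitely many ordinals below $\kappa$, so it is $<\kappa$. This contradicts $p\notin Q$.

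The same argument shows $Q\cap P_0\neq\emptyset$. Every $r$ of positive rank lies above some element of $P_0$, since $P$ is well-founded and every element below $r$ has a minimal element below it. If all elements of the finite set $P_0$ were blind alleys, then the height would be $<\kappa$.

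\emph{Recursion.} Define $p_\alpha\in Q\cap P_\alpha$ for $\alpha<\kappa$ so that $p_\beta\prec p_\alpha$ for all $\beta<\alpha$.
\begin{itemize}
\item Let $p_0$ be the $\lhd$-least element of $Q\cap P_0$.
\item Let $p_{\alpha+1}$ be the $\lhd$-least element of $Q\cap P_{\alpha+1}$ above $p_\alpha$, which exists by the claim. Transitivity gives $p_\beta\prec p_{\alpha+1}$ for all $\beta\leqslant\alpha$.
\item For limit $\alpha$, set $A_\beta=\{x\in Q\cap P_\alpha\mid p_\beta\prec x\}$ for $\beta<\alpha$. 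Each $A_\beta$ is finite, and it is nonempty by the claim.
\end{itemize}

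The limit step is the main obstacle, and it is handled as follows. The sets $A_\beta$ are $\subseteq$-decreasing in $\beta$, by transitivity and $p_\beta\prec p_{\beta'}$ for $\beta<\beta'$. Their cardinalities form a non-increasing sequence of natural numbers, so they are eventually constant. Hence the $A_\beta$ themselves are eventually constant, and $\bigcap_{\beta<\alpha}A_\beta\neq\emptyset$. Let $p_\alpha$ be the $\lhd$-least element of this intersection. The whole recursion is choice-free, since every choice is made by $\lhd$.

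Then $\{p_\alpha\mid\alpha<\kappa\}$ is a chain of order type $\kappa$, and it remains a chain for the original ordering because $\prec_\omega\subseteq\prec$.
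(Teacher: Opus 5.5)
Your proof is correct and follows essentially the paper's route: reduce to the gapless case via Lemma~\ref{S002}, show by counting that the height is at least $\kappa$, discard the blind alleys, and build the chain by a recursion in which the fixed well-ordering replaces choice and finiteness of the levels handles limit stages. Your direct extension claim and your decreasing finite sets $A_\beta$ repackage the paper's two claims about $Q$ and its ``cofinally many'' argument. One small slip: for $\kappa=\omega$ the bound $|P|\leqslant|\hgt(P)|\cdot\omega$ gives no contradiction when the height is finite. That case needs the separate observation, made in the paper, that a finite union of finite levels is finite.
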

\begin{proof}
By Lemma~\ref{S002}, we may assume without loss of generality that $\langle P,\prec\rangle$ is gapless.
Fix a well-ordering $<$ of $P$, which exists since $|P|=\kappa$. We claim that $\hgt(P)\geqslant\kappa$.
Assume to the contrary that $\hgt(P)<\kappa$. If $\kappa=\omega$, then $\hgt(P)<\omega$,
so $P$ is a finite union of finite levels and is therefore finite, which is a contradiction.
If $\kappa>\omega$, then the function that maps each $p\in P$ to $\langle\rk_\prec(p),e_{P_{\rk_\prec(p)}}(p)\rangle$
is an injection from $P$ into $\hgt(P)\times\omega$. Thus $\kappa=|P|\leqslant|{\hgt(P)}|\cdot\omega<\kappa$, which is also a contradiction.
Hence, $\hgt(P)\geqslant\kappa$.

Let $Q$ be the set of all elements of $P$ that are not blind alleys.
Clearly, $Q$ is a downset in $P$; that is, for every $p,q\in P$, if $p\prec q$ and $q\in Q$, then $p\in Q$.
Hence, $\langle Q,\prec\rangle$ is a gapless well-founded poset, and its rank function agrees with that of $\langle P,\prec\rangle$ on $Q$.
We claim that $\langle Q,\prec\rangle$ has the same height as $\langle P,\prec\rangle$ and contains no blind alleys.

Assume towards a contradiction that $\alpha=\hgt(Q)<\hgt(P)$.
Then $P_\alpha\cap Q=\varnothing$; that is, every element of $P_\alpha$ is a blind alley.
Since $P_\alpha$ is finite, it follows that
\[
\beta=\textstyle\max\bigl\{\sup^+\{\rk_\prec(q)\mid p\preccurlyeq q\}\bigm|p\in P_\alpha\bigr\}<\hgt(P).
\]
Let $q\in P_\beta$. Since $\alpha<\beta$, there exists $p\in P_\alpha$ such that $p\prec q$.
Then $\beta=\rk_\prec(q)<\beta$, which is a contradiction. Hence, $\hgt(Q)=\hgt(P)\geqslant\kappa$.

Assume towards a contradiction that $\langle Q,\prec\rangle$ contains a blind alley $p$;
that is, $\gamma=\sup^+\{\rk_\prec(q)\mid p\preccurlyeq q\wedge q\in Q\}<\hgt(Q)=\hgt(P)$.
Since $p$ is not a blind alley in~$P$, $\{r\in P_\gamma\mid p\prec r\}\neq\varnothing$.
Since $P_\gamma$ is finite and all elements of $\{r\in P_\gamma\mid p\prec r\}$ are blind alleys in $P$, it follows that
\[
\delta=\textstyle\max\bigl\{\sup^+\{\rk_\prec(s)\mid r\preccurlyeq s\}\bigm|r\in P_\gamma\wedge p\prec r\bigr\}<\hgt(P).
\]
Let $s\in P_\delta$ with $p\prec s$. Since $\langle P,\prec\rangle$ is gapless,
there exists $r\in P_\gamma$ such that $p\prec r\prec s$. Then $\delta=\rk_\prec(s)<\delta$, which is a contradiction.

Now we recursively construct a chain in $\langle Q,\prec\rangle$ of order type $\kappa$.
Let $x_0$ be the $<$-least element of $Q_0$. If $\alpha<\kappa$ and $x_\alpha\in Q_\alpha$ has been defined,
let $x_{\alpha+1}$ be the $<$-least element of $Q_{\alpha+1}$ such that $x_\alpha\prec x_{\alpha+1}$.
Such an element exists because $x_\alpha$ is not a blind alley in $Q$.
Let $\beta<\kappa$ be a limit ordinal, and assume that the chain $\{x_\alpha\mid\alpha<\beta\}$
has been constructed so that $x_\alpha\in Q_\alpha$ for every $\alpha<\beta$.
Since $\langle Q,\prec\rangle$ contains no blind alleys, for every $\alpha<\beta$ there exists $r\in Q_\beta$ such that $x_\alpha\prec r$.
Since $Q_\beta$ is finite, there must exist an $r\in Q_\beta$ such that $x_\alpha\prec r$ for cofinally many $\alpha<\beta$.
Let $x_\beta$ be the $<$-least such $r$. Then $x_\beta\in Q_\beta$ and $x_\alpha\prec x_\beta$ for all $\alpha<\beta$.

Hence, $\langle Q,\prec\rangle$, and therefore $\langle P,\prec\rangle$, has a chain of order type $\kappa$.
\end{proof}

The following corollary is essential for the proof of the main theorem.

\begin{corollary}\label{S005}
Let $\kappa$ be an aleph, let $P\subseteq\kappa$ with $|P|=\kappa$, and let $f:P\to\kappa$.
Then there exists $Q\subseteq P$ with $|Q|=\kappa$ on which $f$ is monotonic;
that is, for all $\alpha,\beta\in Q$, if $\alpha<\beta$, then $f(\alpha)\leqslant f(\beta)$.
\end{corollary}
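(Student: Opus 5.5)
The plan is to encode monotonicity as a partial order on $P$ and then apply Theorem~\ref{S004}. Define a relation $\prec$ on $P$ by
\[
\alpha\prec\beta\iff\alpha<\beta\wedge f(\alpha)\leqslant f(\beta).
\]
This relation is irreflexive because $<$ is. It is transitive because both conjuncts are transitive. So $\langle P,\prec\rangle$ is a poset. It is well-founded because ${\prec}\subseteq{<}$ and $P$ is a set of ordinals: the $<$-least element of any nonempty subset is $\prec$-minimal. A chain in $\langle P,\prec\rangle$ is exactly a subset of $P$ on which $f$ is monotonic in the sense of the corollary.

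The key step is to show that every level $P_\gamma$ of $\langle P,\prec\rangle$ is finite. Two distinct elements of the same level are $\prec$-incomparable, since $\alpha\prec\beta$ forces $\rk_\prec(\alpha)<\rk_\prec(\beta)$. So if $\alpha<\beta$ both lie in $P_\gamma$, then $\alpha\nprec\beta$, which means $f(\alpha)>f(\beta)$. Thus $f$ is strictly decreasing on $P_\gamma$ with respect to the ordinal order.

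Now suppose $P_\gamma$ were infinite. Being a well-ordered set of ordinals, it has order type at least $\omega$. Let $\alpha_n$ be the $n$-th element of $P_\gamma$ in increasing order; this uses no choice, since $e_{P_\gamma}^{-1}$ restricted to $\omega$ works. Then $f(\alpha_0)>f(\alpha_1)>\cdots$ is an infinite strictly descending sequence of ordinals, which is impossible. Hence each level is finite.

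Theorem~\ref{S004} applies, since $|P|=\kappa$ and all levels are finite. It yields a chain $Q\subseteq P$ of order type $\kappa$, so $|Q|=\kappa$. For $\alpha<\beta$ in $Q$, comparability gives $\alpha\prec\beta$ or $\beta\prec\alpha$. The latter would require $\beta<\alpha$, so $\alpha\prec\beta$ holds, and hence $f(\alpha)\leqslant f(\beta)$.

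The main obstacle is choosing $\prec$ so that both finiteness of levels and the chain property hold simultaneously. The orientation above, with the non-strict inequality, is what makes incomparability translate into strict decrease. Everything else is routine once that choice is made.
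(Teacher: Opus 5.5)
Your proposal is correct and follows the paper's proof essentially step for step. It uses the same relation $\alpha\prec\beta\iff\alpha<\beta\wedge f(\alpha)\leqslant f(\beta)$, the same observation that $f$ is strictly decreasing on each level (so every level is finite), and the same appeal to Theorem~\ref{S004}; you simply spell out details (transitivity, well-foundedness, why a chain of order type $\kappa$ gives monotonicity) that the paper treats as clear.
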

\begin{proof}
We define a relation $\prec$ on $P$ by
\[
\alpha\prec\beta\iff\alpha<\beta\wedge f(\alpha)\leqslant f(\beta).
\]
Clearly, $\langle P,\prec\rangle$ is a well-founded poset. Every level of $\langle P,\prec\rangle$ is finite,
since, whenever $\alpha$ and $\beta$ belong to the same level and $\alpha<\beta$, we have $f(\alpha)>f(\beta)$.
By Theorem~\ref{S004}, $\langle P,\prec\rangle$ has a chain $Q$ of order type $\kappa$.
Then $|Q|=\kappa$ and $f$ is monotonic on $Q$.
\end{proof}

\section{Blocker trees}
Let $\kappa$ be an aleph and let $c:[\kappa]^2\to2$ be a coloring.
For every $X\subseteq\kappa$, the \emph{greedy subset} $\Gr(X)\subseteq X$ is defined recursively as follows:
\[
\beta\in\Gr(X)\iff\forall\alpha<\beta(\alpha\in\Gr(X)\rightarrow c(\{\alpha,\beta\})=0).
\]
Clearly, $\Gr(X)$ is $0$-homogeneous, and $\Gr(X)\neq\varnothing$ whenever $X\neq\varnothing$.
By definition, for each $\beta\in X\setminus\Gr(X)$, $\{\alpha\in\beta\cap\Gr(X)\mid c(\{\alpha,\beta\})=1\}\neq\varnothing$.

Let $Q\subseteq\kappa$ with $|Q|=\kappa$. The \emph{blocker tree} $T_Q$ is defined level by level as follows.
The nodes of $T_Q$ are finite sequences on $Q$, partially ordered by $\subset$,
and each node $s\in T_Q$ carries a nonempty set $X_s\subseteq Q$ and its greedy subset $Y_s=\Gr(X_s)$.
At the root, define $X_\varnothing=Q$. Assume that $s\in T_Q$ and that $X_s\subseteq Q$ has been defined.
For each $\beta\in X_s\setminus Y_s$, let
\[
a_s(\beta)=\min\{\alpha\in\beta\cap Y_s\mid c(\{\alpha,\beta\})=1\}.
\]
For $\alpha\in Y_s$, retain the child $s{}^\frown\langle\alpha\rangle$ precisely when
\[
X_{s{}^\frown\langle\alpha\rangle}=\{\beta\in X_s\setminus Y_s\mid a_s(\beta)=\alpha\}\neq\varnothing.
\]
Note that $X_{s{}^\frown\langle\alpha\rangle}\subseteq X_s\setminus Y_s\subset X_s$ and
$X_{s{}^\frown\langle\alpha\rangle}\cap X_{s{}^\frown\langle\alpha'\rangle}=\varnothing$ whenever $\alpha\neq\alpha'$.
Note also that $Y_s=\Gr(X_s)$ is $0$-homogeneous for every $s\in T_Q$.

It is easy to see that, for all $s,t\in T_Q$, if $s\subset t$, then $X_s\supset X_t$,
and if $s$ and $t$ are incomparable, then $X_s\cap X_t=\varnothing$.
Hence $\{X_s\mid s\in T_Q\}$, partially ordered by $\supset$, is a tree isomorphic to $T_Q$.
As a consequence, for all distinct $s,t\in T_Q$, $Y_s\cap Y_t=\varnothing$.

\begin{lemma}\label{S006}
For every $t\in T_Q$, $t$ is increasing and $\ran(t)$ is $1$-homogeneous.
\end{lemma}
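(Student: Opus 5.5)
We argue by induction on the length of $t$, and along the way we prove a stronger statement. The strengthened invariant is: for every $t\in T_Q$, $t$ is increasing, $\ran(t)$ is $1$-homogeneous, and for every $i<|t|$ and every $\beta\in X_t$ we have $t(i)<\beta$ and $c(\{t(i),\beta\})=1$. In words, every entry of $t$ lies below, and is $1$-connected to, every element of the set $X_t$ that $t$ carries.

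For the root $t=\varnothing$, all three assertions are vacuous.

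For the inductive step, suppose the invariant holds for $s\in T_Q$, and let $t=s{}^\frown\langle\alpha\rangle\in T_Q$ with $\alpha\in Y_s$. We check the three parts in turn.

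\emph{Increasing.} Since $\alpha\in Y_s\subseteq X_s$, the invariant for $s$ gives $s(i)<\alpha$ for all $i<|s|$. Hence $t$ is increasing.

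\emph{$1$-homogeneity of $\ran(t)$.} The invariant for $s$ also gives $c(\{s(i),\alpha\})=1$ for all $i<|s|$. Together with the $1$-homogeneity of $\ran(s)$, this shows $\ran(t)=\ran(s)\cup\{\alpha\}$ is $1$-homogeneous.

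\emph{The invariant for $X_t$.} Let $\beta\in X_t$. By definition $\beta\in X_s\setminus Y_s$ and $a_s(\beta)=\alpha$.
\begin{itemize}
\item From $\beta\in X_s$ and the invariant for $s$, we get $s(i)<\beta$ and $c(\{s(i),\beta\})=1$ for all $i<|s|$.
\item From the definition of $a_s$, we get $\alpha=a_s(\beta)\in\beta\cap Y_s$, so $\alpha<\beta$, and $c(\{\alpha,\beta\})=1$.
\end{itemize}
Thus every entry of $t$ is below $\beta$ and $1$-connected to it, and the invariant holds for $t$.

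Since every node of $T_Q$ is obtained from $\varnothing$ by finitely many such extensions, the lemma follows. There is no real obstacle here; the only point requiring care is to carry the invariant about $X_t$ through the induction, since the bare statement alone is not inductive. Note also that no choice is used: the induction is over finite sequences, and the tree $T_Q$ is defined explicitly.
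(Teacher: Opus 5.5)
Your proof is correct and is essentially the paper's argument. The paper takes $\alpha$ preceding $\beta$ in $t$, with $s$ and $s'$ the initial segments of $t$ preceding $\alpha$ and $\beta$. It observes that $\beta\in Y_{s'}\subseteq X_{s'}\subseteq X_{s{}^\frown\langle\alpha\rangle}$, using the already-noted fact that $X$ shrinks along $\subset$, and reads off $\alpha=a_s(\beta)<\beta$ and $c(\{\alpha,\beta\})=1$. Your strengthened invariant simply builds that shrinking property into the induction rather than citing it.
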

\begin{proof}
Let $\alpha,\beta\in\ran(t)$ be such that $\alpha$ precedes $\beta$ in $t$,
and let $s$ and $s'$ be the initial segments of $t$ preceding $\alpha$ and $\beta$, respectively. Then
\[
\beta\in Y_{s'}\subseteq X_{s'}\subseteq X_{s{}^\frown\langle\alpha\rangle},
\]
which implies that $\alpha=a_s(\beta)<\beta$ and $c(\{\alpha,\beta\})=1$.
\end{proof}

\begin{lemma}\label{S007}
If $c$ has no $1$-homogeneous sets of cardinality $\omega$, then
\[
\sup\{\ot(Y_s)\mid s\in T_Q\}=\kappa,
\]
where $\ot(Y_s)$ is the order type of $Y_s$ with respect to $<$.
\end{lemma}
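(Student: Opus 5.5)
The plan is a counting argument: every $\beta\in Q$ lies in some $Y_s$, and if all the $Y_s$ had small order type the tree $T_Q$, and hence $Q$, would inject into a set of cardinality $<\kappa$. The hypothesis on $c$ (via Lemma~\ref{S006}) is used only in the first step. No choice is needed, since every injection comes from the canonical maps $e_{Y_s}$.

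\textbf{Step 1: every element of $Q$ is captured by some $Y_s$.} Fix $\beta\in Q$ and follow it down the tree, which is a recursion with no choices.
\begin{enumerate}[label=(\roman*)]
\item Start at the root: $\beta\in X_\varnothing$.
\item If $\beta\in X_s$ and $\beta\notin Y_s$, then $\beta\in X_{s^\frown\langle a_s(\beta)\rangle}$, so that child is retained. Move to it.
\item Stop as soon as $\beta\in Y_s$.
\end{enumerate}
If the process never stopped, it would produce an infinite sequence $s_0\subset s_1\subset\cdots$ of nodes of $T_Q$. Their union is an infinite sequence, and each of its finite initial segments lies in $T_Q$. By Lemma~\ref{S006} its range is increasing and $1$-homogeneous, so it would be an infinite $1$-homogeneous set. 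This contradicts the hypothesis. Hence every $\beta$ lands in some $Y_{s(\beta)}$, and this $s(\beta)$ is unique because the $Y_s$ are pairwise disjoint. So $Q=\bigcup_{s\in T_Q}Y_s$, and $\beta\mapsto s(\beta)$ is a definable function.

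\textbf{Step 2: suppose the conclusion fails.} Assume that $\sup\{\ot(Y_s)\mid s\in T_Q\}<\kappa$. Then there is an infinite ordinal $\theta<\kappa$ with $\ot(Y_s)<\theta$ for all $s\in T_Q$, so $|\theta|<\kappa$.

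\textbf{Step 3: bound $|T_Q|$.} Send a node $t=\langle\alpha_0,\dots,\alpha_{n-1}\rangle$ to
\[
\bigl\langle e_{Y_{t\restriction 0}}(\alpha_0),\dots,e_{Y_{t\restriction(n-1)}}(\alpha_{n-1})\bigr\rangle\in\theta^{<\omega}.
\]
This makes sense because each $\alpha_i\in Y_{t\restriction i}$, which holds since the child $t\restriction(i+1)$ is only created for $\alpha_i\in Y_{t\restriction i}$.

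The map is injective. Given the image, one recovers $\alpha_0$ from $Y_\varnothing$ and its index, then $Y_{\langle\alpha_0\rangle}$, then $\alpha_1$, and so on by induction. Hence $|T_Q|\leqslant|\theta^{<\omega}|=|\theta|$, using that $\theta^{<\omega}$ has the same cardinality as $\theta$ for infinite well-orderable $\theta$.

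\textbf{Step 4: reach a contradiction.} The map $\beta\mapsto\langle s(\beta),e_{Y_{s(\beta)}}(\beta)\rangle$ is an injection of $Q$ into $T_Q\times\theta$. Therefore
\[
\kappa=|Q|\leqslant|\theta|\cdot|\theta|=|\theta|<\kappa,
\]
which is a contradiction, so the supremum equals $\kappa$.

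The delicate step is Step~1, where one must check that following the unique child $s^\frown\langle a_s(\beta)\rangle$ never stalls and never runs forever. Apart from that, the argument only needs the injection in Step~3 to be written down carefully.
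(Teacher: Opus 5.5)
Your argument is correct for $\kappa>\omega$, and there it is essentially the paper's proof. Step~1 is the paper's partition claim. Your injections $T_Q\hookrightarrow\theta^{<\omega}$ and $Q\hookrightarrow T_Q\times\theta$ are the paper's single code $\pi(\beta)\in\lambda^{<\omega}$ split into two pieces.

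However, Step~2 fails when $\kappa=\omega$, because there is no infinite ordinal $\theta<\omega$. This is not a cosmetic slip: the counting strategy itself breaks down in that case. Suppose $\kappa=\omega$ and $\ot(Y_s)\leqslant n<\omega$ for all $s\in T_Q$. Then your Step~3 map lands in $n^{<\omega}$, which is countably infinite. So $Q$ injects into a countable set, which is no contradiction with $|Q|=\omega$. The lemma is stated for every aleph, and Theorem~\ref{S009} needs it for $\kappa=\omega$ too, so this case must be handled separately.

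The missing idea is a K\"onig-type argument. It uses the hypothesis on $c$ a second time, contrary to your remark that it is used only in Step~1.

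\begin{enumerate}[label=(\roman*)]
\item If all $Y_s$ are finite, then each node $s$ has at most $|Y_s|$ children, since children are $s^\frown\langle\alpha\rangle$ with $\alpha\in Y_s$. So every level of $T_Q$ is finite.
\item $T_Q$ is infinite, because by your Step~1 the infinite set $Q$ is partitioned into the finite sets $Y_s$.
\item Since $T_Q\subseteq Q^{<\omega}$, we get $|T_Q|=\omega$. Theorem~\ref{S004} then gives a chain $s_0\subset s_1\subset\cdots$ in $T_Q$. Alternatively, use K\"onig's lemma, which holds in $\mathsf{ZF}$ for this well-orderable finitely branching tree by always taking the least child with infinitely many extensions.
\item By Lemma~\ref{S006}, $\bigcup_n\ran(s_n)$ is an infinite $1$-homogeneous set, which is a contradiction.
\end{enumerate}

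With this case added, your proof is complete.
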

\begin{proof}
Suppose that $c$ has no $1$-homogeneous sets of cardinality $\omega$. We claim that
\begin{equation}\label{S008}
\text{$\{Y_s\mid s\in T_Q\}$ is a partition of $Q$.}
\end{equation}
Clearly, $\{Y_s\mid s\in T_Q\}$ is a family of pairwise disjoint nonempty subsets of $Q$.
It remains to show that $Q\subseteq\bigcup\{Y_s\mid s\in T_Q\}$.

Assume towards a contradiction that there exists $\beta\in Q\setminus\bigcup\{Y_s\mid s\in T_Q\}$.
Recursively define $s_0=\varnothing$ and $s_{n+1}=s_n{}^\frown\langle a_{s_n}(\beta)\rangle$ for each $n\in\omega$.
An easy induction shows that $s_n\in T_Q$ and $\beta\in X_{s_n}\setminus Y_{s_n}$ for all $n\in\omega$.
By Lemma~\ref{S006}, $\bigcup_{n\in\omega}\ran(s_n)$ is a $1$-homogeneous set of cardinality $\omega$,
contradicting our assumption. Thus \eqref{S008} is proved.

We proceed to prove the lemma. Assume towards a contradiction that
\[
\lambda=\sup\{\ot(Y_s)\mid s\in T_Q\}<\kappa.
\]
If $\kappa=\omega$, then $\lambda<\omega$, and hence $Y_s$ is finite for all $s\in T_Q$.
An easy induction shows that every level of $T_Q$ is finite.
Since $T_Q\subseteq Q^{<\omega}$, it follows that $|T_Q|\leqslant|Q^{<\omega}|=\omega$.
By \eqref{S008}, $T_Q$ is infinite, and hence $|T_Q|=\omega$.
By Theorem~\ref{S004}, $T_Q$ has a chain $C$ of order type $\omega$.
Hence, by Lemma~\ref{S006}, $\bigcup_{s\in C}\ran(s)$ is a $1$-homogeneous set of cardinality $\omega$,
contradicting our assumption.

Suppose now that $\kappa>\omega$. Let $\beta\in Q$. By \eqref{S008}, there exists a unique $t\in T_Q$ such that $\beta\in Y_t$,
say $t=\langle\alpha_0,\dots,\alpha_{n-1}\rangle$. We encode $\beta$ by the finite sequence
\[
\pi(\beta)=\langle\varepsilon_0,\dots,\varepsilon_{n-1},\varepsilon_n\rangle,
\]
where $\varepsilon_i=e_{Y_{t{\upharpoonright i}}}(\alpha_i)$ for $i<n$ and $\varepsilon_n=e_{Y_t}(\beta)$.
The coding $\pi$ is injective, because for every $i<n$,
$\alpha_i$ can be recovered recursively from $\langle\alpha_0,\dots,\alpha_{i-1}\rangle$ and $\varepsilon_i$,
and $\beta$ can be recovered from $t$ and $\varepsilon_n$.
Since $\ot(Y_s)\leqslant\lambda$ for all $s\in T_Q$, it follows that $\pi$ is an injection from $Q$ into $\lambda^{<\omega}$.
Hence, $\kappa=|Q|\leqslant|\lambda^{<\omega}|\leqslant\max\{|\lambda|,\omega\}<\kappa$, which is a contradiction.
\end{proof}

\section{The Erd\H{o}s--Dushnik--Miller theorem}
Now we are ready to prove the main theorem.

\begin{theorem}\label{S009}
For every aleph $\kappa$,
\[
\kappa\to(\kappa,\omega);
\]
that is, every coloring $c:[\kappa]^2\to2$ has either a $0$-homogeneous set of cardinality $\kappa$
or a $1$-homogeneous set of cardinality $\omega$.
\end{theorem}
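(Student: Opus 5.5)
We argue by contradiction. Assume that $c$ has no $1$-homogeneous set of cardinality $\omega$ and no $0$-homogeneous set of cardinality $\kappa$; we aim for a contradiction. Since every $Y_s$ is $0$-homogeneous and, for $A\subseteq\kappa$, $|A|<\kappa$ iff $\ot(A)<\kappa$, the second assumption says that $\ot(Y_s)<\kappa$ for every blocker tree $T_Q$ and every $s\in T_Q$. On the other hand, Lemma~\ref{S007} gives $\sup\{\ot(Y_s)\mid s\in T_Q\}=\kappa$ for every $Q\subseteq\kappa$ with $|Q|=\kappa$. When $\kappa$ is, say, $\omega$, or more generally when some single node can be forced to carry all the mass, this is already a contradiction. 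But in $\mathsf{ZF}$ we cannot assume $\kappa$ is regular (even successor alephs may be singular). So the proof must turn ``the order types $\ot(Y_s)$ are unbounded in $\kappa$'' into ``some $0$-homogeneous set has order type $\kappa$''.

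The plan is to use Corollary~\ref{S005} to choose $Q$ so that the greedy sets cohere along the tree.
\begin{enumerate}
\item Start from $T_\kappa$. By \eqref{S008} each $\beta<\kappa$ lies in a unique $Y_{t(\beta)}$. Define $f(\beta)$ as an ordinal below $\kappa$ that measures the position of $\beta$, for instance $f(\beta)=\ot(Y_{t(\beta)})$, or a natural-sum combination of the $\varepsilon_i$ from the coding $\pi$ in Lemma~\ref{S007}.
\item Apply Corollary~\ref{S005} to obtain $Q\subseteq\kappa$ with $|Q|=\kappa$ on which $f$ is monotone.
\item Pass to $T_Q$. Monotonicity should ensure that, as we move up in $Q$, elements land in greedy sets whose order types do not decrease. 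We then show that the union of the greedy sets along a suitable branch, or across one level, is still $0$-homogeneous, or at least has a $0$-homogeneous subset of large order type.
\item Use Lemma~\ref{S001} to bound $\ot$ of a finite union by natural sums of order types below $\kappa$, which stay below $\kappa$. This shows that if every $Y_s$ in $T_Q$ had order type $<\kappa$, then the $\ot(Y_s)$ would be bounded by a single ordinal $\lambda<\kappa$. That contradicts Lemma~\ref{S007} applied to $Q$.
\end{enumerate}

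Step~(3) is the main obstacle. The difficulty is to pick $f$ so that monotonicity on $Q$ really prevents the mass of $Q$ from being spread over many nodes with small greedy sets. I would first try $f(\beta)=\ot(\{\gamma\in Y_{t(\beta)}\mid\gamma<\beta\})$ together with the length of $t(\beta)$. The aim is to show that, on the monotone subset $Q$, the greedy construction on $Q$ recovers a set of order type $\geqslant\sup_{\beta\in Q}f(\beta)=\kappa$ inside a single $Y_s$. If this fails, I would iterate the construction $\omega$ times using the finiteness of tree depth: each $\beta$ sits at finite depth, so a further application of Corollary~\ref{S005} to the depth function yields $Q$ on which all points have the same depth. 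That reduces the problem to finitely many coordinates of the code $\pi$, each of which can be controlled by natural sums.

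The case $\kappa=\omega$ needs none of this. There, Lemma~\ref{S007} together with the finiteness argument in its proof already yields an infinite $1$-homogeneous set or an infinite $Y_s$.
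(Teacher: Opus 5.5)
Your treatment of $\kappa=\omega$ is fine. Beyond that, your outline leaves open exactly the step that carries the theorem, and the fillings you suggest for Step~(3) do not work.

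\emph{Branches.} A union of greedy sets along a branch is never $0$-homogeneous. If $s\subsetneq t$ in $T_Q$ and $\alpha=t(|s|)$, then $\alpha\in Y_s$ while $\varnothing\neq Y_t\subseteq X_{s{}^\frown\langle\alpha\rangle}$, so $c(\{\alpha,\beta\})=1$ for every $\beta\in Y_t$.

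\emph{A single node, and Step~(4).} Take $\kappa=\aleph_\omega$, $I_0=[\omega,\omega_1)$, $I_n=[\omega_n,\omega_{n+1})$ for $n\geqslant1$, and let $c(\{\alpha,\beta\})=1$ iff $\alpha=n<\omega$ and $\beta\in I_n$. There is no $1$-homogeneous triple, and $T_\kappa$ has $Y_\varnothing=\omega$ and $Y_{\langle n\rangle}=I_n$. Your $f(\beta)=\ot(Y_{t(\beta)}\cap\beta)$ is monotone on $Q=\omega\cup\bigcup_n\{\beta\in I_n\mid e_{I_n}(\beta)\geqslant\omega_n\}$. This $Q$ has size $\kappa$, satisfies $\sup\{f(\beta)\mid\beta\in Q\}=\kappa$, and all its points outside $\omega$ have depth $1$. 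Yet the greedy sets of $T_Q$ are $\omega$ and the sets $Q\cap I_n$, each of order type $<\kappa$ and together unbounded in $\kappa$. So no single $Y_s$ has order type $\kappa$, and the bound claimed in Step~(4) is false. Lemma~\ref{S001} controls only finite unions, while in general there are $\kappa$ many nodes.

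\emph{A level.} Adding color $1$ between $I_n$ and $I_{n+1}$ creates no $1$-homogeneous triple, but it makes the union of the level-$1$ greedy sets non-homogeneous.

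So the greedy sets alone give no contradiction. A $0$-homogeneous set of size $\kappa$ has to be assembled from pieces of many different $Y_s$, with a separate device controlling the colors between pieces. A position function read off $T_\kappa$ says nothing about those colors, nor even about the different tree $T_Q$.

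The missing idea is the paper's device for that control.
\begin{enumerate}
\item Pass to $P\subseteq\kappa$ with $|P|=\kappa$ in which every color-$1$ neighborhood $N_P(\alpha)$ has size $<\kappa$. If no such $P$ exists, iterating $A_{n+1}=N_{A_n}(\alpha_n)$ from $A_0=\kappa$ produces an infinite $1$-homogeneous set.
\item Apply Corollary~\ref{S005} to $f(\alpha)=\ot(N_P(\alpha))$, not to a position in a tree. On the resulting $Q$, the neighborhoods of all points of $Q$ below some $\theta\in Q$ have order type $\leqslant f(\theta)$. Their union therefore injects into $\theta\times f(\theta)$ and has size $<\kappa$.
\item Recurse along a cofinal sequence $\langle\gamma_\xi\rangle_{\xi<\cf(\kappa)}$. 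Let $B_\xi$ be the union of the neighborhoods of the bounded sets $D_\eta$ ($\eta<\xi$) already chosen. Lemma~\ref{S007} gives a node $s_\xi$ with $\ot(Y_{s_\xi})>\ot(B_\xi)\nsum\gamma_\xi$. Lemma~\ref{S001} then gives $\ot(Y_{s_\xi}\setminus B_\xi)>\gamma_\xi$. Let $D_\xi$ be the first $\gamma_\xi$ elements of $Y_{s_\xi}\setminus B_\xi$.
\end{enumerate}
Homogeneity inside each $D_\xi$ comes from the greedy set, and homogeneity between $D_\eta$ and $D_\xi$ comes from avoiding $B_\xi$. The union $\bigcup_\xi D_\xi$ has size $\kappa$ even when $\kappa$ is singular, which is the case your plan does not reach.
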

\begin{proof}
Let $\kappa$ be an aleph and let $c:[\kappa]^2\to2$ be a coloring.
Suppose that $c$ has no $1$-homogeneous sets of cardinality $\omega$.
Our goal is to construct a $0$-homogeneous set of cardinality $\kappa$.

For $A\subseteq\kappa$ and $\alpha\in A$, write
\[
N_A(\alpha)=\{\beta\in A\setminus\{\alpha\}\mid c(\{\alpha,\beta\})=1\}
\]
for the \emph{total color-$1$ neighborhood} of $\alpha$ in $A$.
We claim that there exists $P\subseteq\kappa$ with $|P|=\kappa$ such that $|N_P(\alpha)|<\kappa$ for all $\alpha\in P$.
Assume towards a contradiction that for every $A\subseteq\kappa$ with $|A|=\kappa$
there exists $\alpha\in A$ such that $|N_A(\alpha)|=\kappa$.
Recursively define $A_0=\kappa$, $\alpha_n=\min\{\alpha\in A_n\mid|N_{A_n}(\alpha)|=\kappa\}$,
and $A_{n+1}=N_{A_n}(\alpha_n)$ for each $n\in\omega$. Then $\{\alpha_n\mid n\in\omega\}$
is a $1$-homogeneous set of cardinality $\omega$, contradicting our assumption.

Let $f:P\to\kappa$ be defined by
\[
f(\alpha)=\ot(N_P(\alpha)).
\]
By Corollary~\ref{S005}, there exists $Q\subseteq P$ with $|Q|=\kappa$ on which $f$ is monotonic.
Let $T_Q$ be the blocker tree of $Q$.

Let $\langle\gamma_\xi\rangle_{\xi<\cf(\kappa)}$ be a cofinal sequence in $\kappa$.
We recursively define bounded subsets $D_\xi\subseteq Q$ for $\xi<\cf(\kappa)$ as follows.
Assume that $D_\eta$ has been defined for all $\eta<\xi$.
For each $\eta<\xi$, let $\delta_\eta$ be the least element of $Q$ that is an upper bound of $D_\eta$,
and let $\theta_\xi$ be the least element of $Q$ such that $\delta_\eta<\theta_\xi$ for all $\eta<\xi$.
Then $D_\eta\subseteq\theta_\xi$ for every $\eta<\xi$.
Since $f$ is monotonic on $Q$, for every $\eta<\xi$ and every $\alpha\in D_\eta$, we have
\[
\ot(N_P(\alpha))=f(\alpha)\leqslant f(\delta_\eta)\leqslant f(\theta_\xi).
\]
Let
\[
B_\xi=\bigcup_{\eta<\xi}\bigcup_{\alpha\in D_\eta}N_P(\alpha).
\]
For each $\beta\in B_\xi$, let $\alpha_\beta$ be the least element of $\bigcup_{\eta<\xi}D_{\eta}$
such that $\beta\in N_P(\alpha_\beta)$. Then the map
\[
\beta\longmapsto\langle\alpha_\beta,e_{N_P(\alpha_\beta)}(\beta)\rangle
\]
is an injection from $B_\xi$ into $\theta_\xi\times f(\theta_\xi)$. Hence
\[
|B_\xi|\leqslant|\theta_\xi|\cdot|f(\theta_\xi)|<\kappa.
\]
Let $\beta_\xi=\ot(B_\xi)<\kappa$. Then $\beta_\xi\nsum\gamma_\xi<\kappa$.
Note that $T_Q\subseteq Q^{<\omega}\subseteq\kappa^{<\omega}$.
Fix the length-first lexicographic well-ordering of $\kappa^{<\omega}$.
By Lemma~\ref{S007}, there is a least $s_\xi\in T_Q$ such that
\[
\ot(Y_{s_\xi})>\beta_\xi\nsum\gamma_\xi.
\]
Hence $\ot(Y_{s_\xi}\setminus B_\xi)>\gamma_\xi$, since otherwise, by Lemma~\ref{S001},
\[
\ot(Y_{s_\xi})\leqslant\ot(B_\xi)\nsum\ot(Y_{s_\xi}\setminus B_\xi)\leqslant\beta_\xi\nsum\gamma_\xi,
\]
which is a contradiction. Now we define $D_\xi$ to be the first $\gamma_\xi$ elements of $Y_{s_\xi}\setminus B_\xi$.
Then $D_\xi$ is a bounded subset of $Q$ of order type $\gamma_\xi$.

Finally, let
\[
H=\bigcup_{\xi<\cf(\kappa)}D_\xi.
\]
For every $\xi<\cf(\kappa)$, we have $\ot(H)\geqslant\ot(D_\xi)=\gamma_\xi$. Thus $|H|=\kappa$.
We conclude the proof by showing that $H$ is $0$-homogeneous.
Let $\alpha,\beta$ be distinct elements of $H$. If $\alpha,\beta\in D_\xi$ for some $\xi<\cf(\kappa)$,
then $c(\{\alpha,\beta\})=0$ since $D_\xi\subseteq Y_{s_\xi}$ is $0$-homogeneous.
Otherwise, by symmetry, suppose that $\alpha\in D_\eta$ and $\beta\in D_\xi$ for some $\eta<\xi<\cf(\kappa)$.
Then $\beta\notin B_\xi$, and hence $\beta\notin N_P(\alpha)$, which implies that $c(\{\alpha,\beta\})=0$.
\end{proof}

We conclude the article with the following corollary of Theorem~\ref{S009}, which is also mentioned in~\cite{Caicedo2010}.

\begin{corollary}
If $P$ is infinite and $<$ and $\prec$ are two well-orderings of $P$,
then there exists a subset $Q\subseteq P$ with $|Q|=|P|$ on which $<$ and $\prec$ coincide.
\end{corollary}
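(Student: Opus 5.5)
Since $P$ is infinite and $<$ is a well-ordering of $P$, the cardinality $|P|$ is an aleph; call it $\kappa$. Using $<$, we fix the order isomorphism $e=e_P:\langle P,<\rangle\to\ot(P)$. The set of ordinals $\{\alpha\mid\alpha<\ot(P),\ |\alpha|<\kappa\}$ is cofinal in $\ot(P)$, so we may first pass to an initial segment: let $P'$ be the set of the first $\kappa$ elements of $P$ under $<$. Then $|P'|=\kappa$, and $\langle P',<\rangle$ has order type exactly $\kappa$. Both $<$ and $\prec$ restrict to well-orderings of $P'$, and any $Q\subseteq P'$ with $|Q|=\kappa$ also satisfies $|Q|=|P|$. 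Hence, replacing $P$ by $P'$ and transporting along $e$, we may assume $P=\kappa$ and that $<$ is the usual ordinal order. The remaining data is then a well-ordering $\prec$ of $\kappa$.

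Next we define a coloring $c:[\kappa]^2\to2$ by
\[
c(\{\alpha,\beta\})=0\iff(\alpha<\beta\leftrightarrow\alpha\prec\beta),
\]
so that $c(\{\alpha,\beta\})=1$ exactly when the two orders disagree on the pair. This definition is symmetric, so $c$ is a genuine coloring of unordered pairs, and no choice is involved in defining it. A $0$-homogeneous set is precisely a set on which $<$ and $\prec$ coincide. By Theorem~\ref{S009}, either there is a $0$-homogeneous $Q$ of cardinality $\kappa$, which is the desired set, or there is a $1$-homogeneous set $H$ with $|H|=\omega$.

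The main step is ruling out the second alternative. Suppose $H$ is $1$-homogeneous and infinite. Then on $H$ the order $\prec$ is the reverse of $<$. Since $H$ is a countably infinite set of ordinals, we can take its $<$-increasing enumeration $h_0<h_1<\cdots$ without any choice. Then $h_0\succ h_1\succ h_2\succ\cdots$, so the set $\{h_n\mid n\in\omega\}$ has no $\prec$-least element. This contradicts the assumption that $\prec$ is a well-ordering. Hence the first alternative must hold, and it yields $Q$ with $|Q|=|P|$ on which $<$ and $\prec$ coincide.

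I expect no serious obstacle. The only care needed is in the reduction to $P=\kappa$: we must use $P'$ of order type exactly $\kappa$ rather than $\ot(P)$ itself, because Theorem~\ref{S009} is stated for alephs, not for arbitrary ordinals. We must also check that each step (the cardinal $\kappa$, the isomorphism $e$, and the enumeration of $H$) is definable from the given well-orderings, which it is, so the argument stays in $\mathsf{ZF}$.
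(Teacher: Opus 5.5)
Your argument is correct and is the derivation the paper intends (the paper gives no written proof and presents the corollary as an immediate consequence of Theorem~\ref{S009}): color a pair $1$ when $<$ and $\prec$ disagree on it, and rule out an infinite $1$-homogeneous set because its first $\omega$ elements under $<$ would form a $\prec$-descending sequence. One side remark is wrong: you claim $\{\alpha<\ot(P)\mid|\alpha|<\kappa\}$ is cofinal in $\ot(P)$, but this fails whenever $\ot(P)>\kappa$ (the set is then just $\kappa$); you never use it, since your reduction only needs $\kappa\leqslant\ot(P)$, which holds because $|\ot(P)|=|P|=\kappa$.
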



\begin{thebibliography}{99}

\bibitem{Caicedo2010} A.~Caicedo,
Distinct well-orderings of the same set,
MathOverflow (2010), \url{https://mathoverflow.net/q/40507}.

\bibitem{Csernak2025} T.~Csern\'ak and L.~Soukup,
Infinite combinatorics revisited in the absence of Axiom of choice,
\emph{Arch. Math. Logic} \textbf{64} (2025), 473--491.

\bibitem{Dushnik1941} B.~Dushnik and E.~W.~Miller,
Partially ordered sets,
\emph{Amer. J. Math.} \textbf{63} (1941), 600--610.

\bibitem{Kanamori2003} A.~Kanamori,
\emph{The Higher Infinite: Large Cardinals in Set Theory from their Beginnings},
2nd ed., Springer Monogr. Math., Springer, Berlin, 2003.

\bibitem{Karagila2014} A.~Karagila,
Absolutely Choiceless Proofs,
arXiv preprint (2014), arXiv:1402.3048.

\bibitem{Levy1979} A.~Levy,
\emph{Basic Set Theory},
Springer, Berlin, 1979.

\end{thebibliography}
\end{document}